\documentclass[12pt]{amsart}
\usepackage{bbm}
\usepackage[top=1in, bottom=1in, left=0.8in, right=0.8in]{geometry}

\usepackage{amsmath,amscd}
\usepackage{amssymb}
\usepackage{amsthm}
\usepackage{comment}
\usepackage{graphicx, xcolor}
\usepackage[abbrev,nobysame,alphabetic]{amsrefs}
\usepackage{mathrsfs}
\usepackage[ocgcolorlinks, linkcolor=blue]{hyperref}

\usepackage{bm}
\usepackage{bbm}
\usepackage{url}

\usepackage[utf8]{inputenc}
\usepackage{mathtools,amssymb}
\usepackage{esint}
\usepackage{tikz}
\usepackage{dsfont}
\usepackage{relsize}
\usepackage{url}
\usepackage{xcolor}
\usepackage{graphicx}
\usepackage{mathrsfs}
\usepackage[shortlabels]{enumitem}
\usepackage{lineno}
\usepackage{amsmath}
\usepackage{enumitem}
\usepackage{amsthm} 
\usepackage{verbatim}
\usepackage{dsfont}
\numberwithin{equation}{section}

\allowdisplaybreaks

\graphicspath{{images/}}

\newtheorem{thm}{Theorem}[section]

\newtheorem{lem}[thm]{Lemma}

\theoremstyle{definition}
\newtheorem{rem}[thm]{Remark}

\newtheorem*{claim*}{Claim}
\theoremstyle{remark}

\numberwithin{equation}{section}

\newcommand{\R}{{\mathbb R}}

\title[]{Eigenfunction equivalence for the fractional Laplace-Beltrami operator and the classical Helmholtz equation}

\author{Saumyajit Das}
\address{Harish-Chandra Research Institute, Homi Bhabha National Institute, Chhatnag Road, Jhunsi, Prayagraj (Allahabad) 211 019, India}
\email{saumyajit.math.das@gmail.com}

\author{Susovan Pramanik}
\address{Harish-Chandra Research Institute, Homi Bhabha National Institute, Chhatnag Road, Jhunsi, Prayagraj (Allahabad) 211 019, India}
\email{susovanpramanik@hri.res.in}

\begin{document}

	\begin{abstract}
		In this article we study the spectral problem related to the fractional Laplace-Beltrami equation on
		$(\mathbb{R}^d,g)$ and establish its equivalence with
		the classical anisotropic Helmholtz equation.  The proof is based on
		Seeley's construction of complex powers of elliptic operators and the
		pseudodifferential symbolic calculus.  As an application, we describe the
		related fixed-frequency inverse scattering problem of recovering the metric
		from the scattering amplitude.
	\end{abstract}
	\maketitle
	
	\section{Introduction}
	
	Fractional powers of elliptic operators arise naturally in anomalous diffusion,
	nonlocal wave propagation, and scattering theory.  In the Euclidean setting,
	the fractional Laplacian is the Fourier multiplier with symbol $|\xi|^{2s}$.
	For a variable Riemannian metric $g$, the corresponding object is the
	fractional Laplace--Beltrami operator $(-\Delta_g)^s$, which is most naturally
	understood through spectral theory and the pseudodifferential calculus.

	In this article we study the eigenvalue problem corresponding to the fractional
	Laplace--Beltrami operator in the setting of a Riemannian manifold
	$(\mathbb{R}^d,g)$, with smooth metric $g$. We assume throughout that $g$ is
	uniformly comparable with the Euclidean metric, namely
	\begin{align}\label{uniform ellipticiy}
		\theta |\xi|^2 \leq \sum_{i,j=1}^d g^{ij}(x)\xi_i\xi_j\leq \Theta |\xi|^2,
		\qquad x,\xi\in\mathbb{R}^d,
	\end{align}
	for some constants $0<\theta\leq \Theta<\infty$. We also assume that the
	coefficients of $g$ and their derivatives are bounded. This ensures that
	$P_g:=-\Delta_g$ is a positive uniformly elliptic differential operator of
	order two and that the standard pseudodifferential calculus applies.
	
	Authors in the articles \cite{cheng2023equivalence,guan2023helmholtz} studied
	related equivalence results in the Euclidean metric
	$(g_{ij}=\delta_{ij})$ setting and established a connection between the
	eigenfunctions of the fractional Schr\"odinger operator and those of the
	classical Schr\"odinger operator. The precise result is the following.
	\begin{thm}[{\cite[Theorem 1.1]{cheng2023equivalence}}]\label{comparability of fractional and local Helmholtz eq, eucleadian metric}
		Let $s>0$. Assume $u\in L^{\infty}(\R^d)$ satisfies $-(-\Delta)^su+u=0$ in $\mathcal{S}'(\R^d)$. Then $u\in C^{\infty}(\R^d)\cap L^{\infty}(\R^d)$ and $\Delta u+u=0$.
	\end{thm}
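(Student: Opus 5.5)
The natural route is through the Fourier transform. We show that $\hat u$ is supported on the unit sphere $\{|\xi|=1\}$. Then we show that $(|\xi|^2-1)\hat u=0$, which is exactly $\Delta u+u=0$.

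\textbf{Step 1: meaning of the equation.} For $u\in L^\infty$, the operator $(-\Delta)^s u$ has to be interpreted. We use the standard duality definition: $\langle (-\Delta)^s u,\varphi\rangle=\langle u,(-\Delta)^s\varphi\rangle$ for $\varphi$ in a suitable test class. For $\varphi\in\mathcal S$ the function $(-\Delta)^s\varphi$ decays like $|x|^{-d-2s}$ when $s\notin\mathbb N$, so it is integrable against $u\in L^\infty$. The equation then reads $\langle \hat u,(1-|\xi|^{2s})\psi\rangle=0$ for Schwartz $\psi$, at least for $\psi$ supported away from $\xi=0$ so that $|\xi|^{2s}\psi$ is again Schwartz.

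\textbf{Step 2: localizing the support.} Take $\psi\in C_c^\infty$ with support avoiding $\{|\xi|=1\}\cup\{0\}$. Then $\psi/(1-|\xi|^{2s})\in C_c^\infty$, so $\langle\hat u,\psi\rangle=0$, and hence $\operatorname{supp}\hat u\subset S^{d-1}\cup\{0\}$. To exclude the origin, note that the component of $\hat u$ supported at $0$ is a finite sum of derivatives of $\delta$, so the corresponding part of $u$ is a polynomial. Testing the equation against functions concentrated near $\xi=0$ with the weight $1-|\xi|^{2s}$ should show that this polynomial $p$ satisfies $p-(-\Delta)^s p=0$ in the duality sense. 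Boundedness of $u$ forces $p$ to be constant. For a constant $c$ we have $(-\Delta)^s c=0$, so the equation gives $c=0$.

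This step is the main obstacle. The symbol $|\xi|^{2s}$ is not smooth at the origin, so multiplying a distribution supported at $0$ by $1-|\xi|^{2s}$ is delicate. I would handle it with a smooth cutoff $\chi$ near $0$. Write $u=u_1+u_2$ with $\hat u_1=\chi\hat u$. Here $u_1$ is smooth and of polynomial growth, and $u_2$ has Fourier support away from the origin. On the support of $\hat u_2$ the symbol $|\xi|^{2s}$ is smooth, so the relation $(1-|\xi|^{2s})\hat u_2=0$ can be handled by the argument of Step 3 below. Boundedness then passes to the low-frequency piece, and that piece is a bounded polynomial, hence constant.

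\textbf{Step 3: concluding.} We now have $\hat u$ supported on the sphere. Near the sphere the function $1-|\xi|^{2s}$ is smooth and factors as $(1-|\xi|^2)h(\xi)$ with $h$ smooth and nonvanishing there. Also $1-|\xi|^{2s}$ vanishes simply on the sphere, with nonzero normal derivative. A distribution of finite order supported on the hypersurface is locally $\sum_{k\le N}\partial_\nu^k$ applied to layers. If it is annihilated by multiplication by a simple defining function, only the $k=0$ term can survive; for tempered $\hat u$ this follows from the structure theorem applied inductively in $k$. Hence $(1-|\xi|^2)\hat u=h^{-1}(1-|\xi|^{2s})\hat u=0$, that is, $\Delta u+u=0$.

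Smoothness of $u$ follows because $\hat u$ has compact support, so $u$ is real analytic, and $u\in L^\infty$ by hypothesis.
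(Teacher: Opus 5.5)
The paper does not prove this statement; it quotes it from \cite{cheng2023equivalence}. Your proof is correct, modulo two routine justifications noted below, and it takes a genuinely different route from the paper's method.

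\textbf{Comparison with the paper's method.} The nearest argument in the paper is its proof of Theorem~\ref{equivalence result}. For $g_{ij}=\delta_{ij}$ and $k=1$ that proof factors $-\Delta-1=((-\Delta)^s-1)B_1$, where $B_1$ is the multiplier $b_1(|\xi|^2)=(|\xi|^2-1)/(|\xi|^{2s}-1)$, and then tests the equation against $B_1\phi$. You instead work with $\operatorname{supp}\hat u$: you reduce it to $S^{d-1}\cup\{0\}$, remove the origin separately, and use the same ratio (your $h=1/b_1(|\xi|^2)$) only near the sphere, where it is honestly smooth and equals $s$ on $S^{d-1}$.

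This separation matters. Near $\xi=0$ one has $b_1(|\xi|^2)=1+|\xi|^{2s}+\cdots$, so for non-integer $s$ the function $B_1\phi$ is in general not Schwartz; it decays only like $|x|^{-d-2s}$. The duality step of the factorization route therefore needs the equation for such test functions. Since $u\in L^\infty$, an approximation argument supplies this. After that, the factorization needs no support analysis, the origin takes care of itself, and the argument makes sense for variable metrics, where no Fourier-support argument is available.

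Your route avoids that issue altogether. It also works verbatim for every $s>0$, as the statement requires. It gives $u\in C^\infty$ (indeed entire) for free from the compact support of $\hat u$, whereas the paper obtains smoothness afterwards from hypoellipticity.

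\textbf{Steps to write out.} First, make explicit why the low-frequency piece is bounded: $u_1=\check\chi*u$ with $\check\chi\in L^1$. Boundedness cannot come from Step 3, since $u_2$ is a priori only of polynomial growth; for instance $x_1e^{ix_2}$ solves $\Delta v+v=0$. Also drop the sentence asserting $p-(-\Delta)^sp=0$ in the duality sense, because $\int p\,(-\Delta)^s\varphi$ need not converge when $\deg p\ge 2s$. The order ``bounded $\Rightarrow$ constant $\Rightarrow$ zero'' in your cutoff paragraph is the right one.

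Second, to get $c=0$, test with $\varphi$ such that $\hat\varphi\in C_c^\infty(B(0,1/2))$ and $\hat\varphi(0)\neq 0$. Then $w=\varphi-(-\Delta)^s\varphi$ is merely in $L^1$, so you need an argument for $\int u_2\,w=0$. Write $w=\eta*w$ with $\eta\in\mathcal S$ radial, $\hat\eta=1$ on $B(0,1/2)$ and $\hat\eta=0$ near $S^{d-1}$. Fubini, justified since $u_2\in L^\infty$ and $\eta,w\in L^1$, gives $\int u_2\,w=\int(\eta*u_2)\,w=0$. The equation then reduces to $c\,\hat\varphi(0)=0$.

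Finally, the structure-theorem detour in Step 3 can be dropped. Take $\chi_1$ equal to $1$ near $S^{d-1}$ and supported where $h\neq0$ and $|\xi|^{2s}$ is smooth. Then $\langle\hat u,(1-|\xi|^2)\phi\rangle=\langle\hat u,(1-|\xi|^{2s})\,\chi_1h^{-1}\phi\rangle=0$ directly by Step 1.
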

	Such results were also carried out by Guan, Murugan, Wei
	\cite{guan2023helmholtz} with some additional decay assumption at infinity.
	Here we prove the corresponding statement for the Laplace-Beltrami operator. Let
	\[
	-\Delta_g = -\frac{1}{\sqrt{|g|}} \sum_{i,j=1}^d \frac{\partial}{\partial x_i}
	\left( \sqrt{|g|} \, g^{ij} \frac{\partial}{\partial x_j} \right),
	\]
	where $g^{ij}=(g_{ij})^{-1}$ and $|g|=\det(g_{ij})$. Consider the fractional
	Helmholtz equation
	\begin{align}\label{fractional Helmholtz eq}
		-(-\Delta_g)^s u+k^{2s}u=0
		\qquad \text{in }\R^d, \qquad 0<s<1,
	\end{align}
	where $k>0$ is fixed. Equivalently,
	\[
	(P_g)^s u=k^{2s}u, \qquad P_g=-\Delta_g.
	\]
	
	When $g$ is Euclidean, the Herglotz wave function
	\[
	u_h(x):= \int_{\mathbb{S}^{d-1}} e^{ikx\cdot \theta} h(\theta) \, ds(\theta),
	\qquad h\in L^2(\mathbb{S}^{d-1}),
	\]
	solves the massless fractional Helmholtz equation. This follows from the
	Fourier multiplier identity
	\[
	(-\Delta)^s(e^{ix\cdot\xi})=|\xi|^{2s}e^{ix\cdot\xi}.
	\]
	For a variable metric, however, $(-\Delta_g)^s$ is no longer a Fourier
	multiplier. The correct replacement is Seeley's construction of complex powers
	of elliptic operators: since $P_g\in\Psi^2_{\mathrm{cl}}$ is positive elliptic,
	\[
	(P_g)^s\in\Psi^{2s}_{\mathrm{cl}},
	\qquad
	\sigma_{\mathrm{pr}}((P_g)^s)(x,\xi)=\big(g^{ij}(x)\xi_i\xi_j\big)^s.
	\]
	This observation allows us to factor the fractional equation by means of the
	pseudodifferential symbolic calculus.
	
	Let us state the main theorem of this article.
	\begin{thm}\label{equivalence result}
		Let $0<s<1$ and $k>0$. Let $g$ be a smooth uniformly elliptic metric on
		$\mathbb{R}^d$, with bounded derivatives as above. If
		$u\in L^{\infty}(\mathbb{R}^d)$ satisfies
		\[
		-(-\Delta_g)^s u+k^{2s}u=0
		\qquad \text{in } \mathcal{S}'(\R^d),
		\]
		then $u$ solves the classical anisotropic Helmholtz equation
		\[
		\Delta_g u+k^2u=0
		\qquad \text{in }\R^d,
		\]
		in the sense of distributions.
	\end{thm}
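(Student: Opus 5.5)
The strategy is to write the difference $P_g^s-k^{2s}$ as a composition $Q\circ(P_g-k^2)$ with $Q$ elliptic of order $2s-2$. An exact functional-calculus factorisation does this cleanly, whereas Seeley's symbolic calculus only gives it modulo smoothing operators. From the identity we then show that $v:=(P_g-k^2)u$ is annihilated by $Q$ and conclude $v=0$.

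\textbf{Step 1: exact factorisation via functional calculus.} $P_g$ is self-adjoint and nonnegative on $L^2(\mathbb{R}^d,\sqrt{|g|}\,dx)$. Consider
\[
q(\lambda)=\frac{\lambda^s-k^{2s}}{\lambda-k^2},\qquad q(k^2)=s k^{2s-2}.
\]
This is smooth on $[0,\infty)$, strictly positive (because $\lambda\mapsto\lambda^s$ is increasing), and a symbol of order $s-1$ in $\lambda$. Put $Q:=q(P_g)$. Then
\[
P_g^s-k^{2s}=Q\,(P_g-k^2)=(P_g-k^2)\,Q
\]
holds exactly as operators. By Seeley's theory, or by the Helffer--Sjöstrand formula for symbols $q\in S^{s-1}$, $Q\in\Psi^{2s-2}_{\mathrm{cl}}$ with principal symbol $q(g^{ij}\xi_i\xi_j)\approx (g^{ij}\xi_i\xi_j)^{s-1}$. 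Hence $Q$ is elliptic, and it is bounded on $\mathcal{S}$ and on weighted spaces, using the bounded-geometry assumptions on $g$.

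\textbf{Step 2: pass to bounded $u$.} We must justify $Q(P_g-k^2)u=(P_g^s-k^{2s})u=0$ in $\mathcal{S}'$ for $u\in L^\infty$. I would interpret $P_g^s u$ by duality, $\langle P_g^s u,\varphi\rangle=\langle u,\,P_g^{s*}\varphi\rangle$. The subtlety is that $P_g^s\varphi$ is not Schwartz: it decays only like $|x|^{-d-2s}$, because of the nonsmooth behaviour of $\lambda^s$ at $0$. It is, however, integrable, so the pairing with $u\in L^\infty$ makes sense. The same holds for $Q^*\varphi$, since $q$ is smooth at $k^2$ but $q(\lambda)\sim k^{-2}(k^{2s}-\lambda^s)$ near $0$, again nonsmooth there. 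The identity then transfers by duality:
\[
\langle (P_g-k^2)u,\,Q^*\varphi\rangle=\langle u,\,(P_g^s-k^{2s})^*\varphi\rangle=0,
\]
for all $\varphi\in\mathcal{S}$. Here one needs a density argument, because $Q^*\varphi\notin\mathcal{S}$ and $(P_g-k^2)u$ is only a second-order derivative of a bounded function.

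\textbf{Step 3: kill $v$.} It remains to show that the range $\{Q^*\varphi\}$ is dense enough to force $v=(P_g-k^2)u=0$. Since $q>0$ everywhere, $q^{-1}$ is smooth with $q^{-1}\in S^{1-s}$, so $Q^{-1}=q^{-1}(P_g)$ exists. Given $\psi\in C_c^\infty$, set $\varphi=Q^{-1}\psi$; then $Q^*\varphi=\psi$. The catch is that $\varphi$ is not Schwartz, since $q^{-1}$ is again nonsmooth at $\lambda=0$. So I would enlarge the admissible test class to functions decaying like $\langle x\rangle^{-d-2s}$ together with derivatives, check that the equation $(P_g^s-k^{2s})u=0$ extends to this class by approximation (dominated convergence against $u\in L^\infty$), and then conclude $\langle v,\psi\rangle=0$ for all $\psi\in C_c^\infty$.

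\textbf{Main obstacle.} The main obstacle is the low-frequency analysis in Steps 2--3. We need off-diagonal and decay estimates for the kernels of $f(P_g)$ with $f$ behaving like $\lambda^s$ near $0$, on a noncompact variable-coefficient manifold, to make the extended test class closed under the operators used. I would get these from Seeley/Helffer--Sjöstrand at high frequency combined with heat-kernel (Gaussian) bounds for $e^{-tP_g}$ at large $t$, using the subordination formula $\lambda^s=c_s\int_0^\infty(1-e^{-t\lambda})t^{-1-s}\,dt$.
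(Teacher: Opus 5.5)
Your argument is essentially the paper's. The paper also tests the equation against $B_k\phi$, where $B_k=b_k(P_g)$ and $b_k=1/q$, and uses the exact identity $(P_g^s-k^{2s})B_k=P_g-k^2$ to conclude $\langle u,(P_g-k^2)\phi\rangle=0$. Your choice $\varphi=Q^{-1}\psi$ in Step 3 is exactly this, and the pairing $\langle (P_g-k^2)u,Q^*\varphi\rangle$ in Step 2 only rewrites it.

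The real difference is how the pairing is justified. The paper asserts $B_k\phi\in\mathcal{S}$ because $B_k\in\Psi^{2-2s}_{\mathrm{cl}}$. It relies on appendix lemmas claiming that $b_k$ is smooth on $[0,\infty)$ and obeys symbol bounds uniformly down to $\xi=0$. You are right that this fails at low frequency. Near $\lambda=0$ one has $b_k(\lambda)=k^{2-2s}+k^{2-4s}\lambda^s+O(\lambda^{\min(1,2s)})$. So already for $g=\delta$ the multiplier $b_k(|\xi|^2)$ has a $|\xi|^{2s}$ cusp at the origin, and $B_k\phi$ decays only like $|x|^{-d-2s}$ whenever $\hat\phi(0)\neq0$. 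Your enlarged test class with an approximation step is therefore what the argument actually needs. Your route is correct and more careful than the paper's. The price is a low-frequency estimate on $b_k(P_g)$ that you only sketch and the paper does not address.

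\textbf{Two remarks on your write-up.} First, Step 1 asserts that $q$ is smooth on $[0,\infty)$ and that $Q$ is bounded on $\mathcal{S}$. Both claims repeat the paper's error and contradict your own Step 2, so drop them.

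Second, the extension is cleaner in $L^1$ than via pointwise decay and dominated convergence. Since $e^{-tP_g}$ is a contraction on $L^1(\sqrt{|g|}\,dx)$, the subordination formula gives $\|P_g^s w\|_{L^1}\lesssim\|w\|_{L^1}+\|P_g w\|_{L^1}$. Approximating by $\chi(x/n)\varphi$ then extends the equation to every smooth $\varphi\in W^{2,1}$. It remains only to show $B_k\psi\in W^{2,1}$ for $\psi\in C_c^\infty$, and that is where your Gaussian heat-kernel bounds are needed.
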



	For rational exponents $s=1/q$, one can formally obtain the conclusion by
	iterating $(-\Delta_g)^{1/q}$ and using the multiplicative property of spectral
	powers.  The pseudodifferential proof below treats all $0<s<1$ at once and does
	not require a separate rationality assumption on $s$.
	
	Let us assume the metric $g$ is Euclidean outside an Euclidean ball, i.e., $(g_{ij}(x)=\delta_{ij})$ outside the set $\{0\leq |x|< R\}$, for some $R>0$. Let us analyze the solution of the Helmholtz equation in $(\mathbb{R}^d,g)$ for such a given metric $g$. Given $k\in\R\setminus\{0\}$, $w\in\mathbb{S}^{d-1}$, the outgoing eigenfunctions, $u_g(\lambda,x,w)$ are solution of
	\begin{align}\label{local helmoholtz eq}
		\Delta_g \,u_g+k^2u_g=0 \ \ \ \mbox{in}\ \ \R^d, \ \ \ 0<s<1,
	\end{align}
	which have the asymptotic behavior instance in \cite[Lemma 19.3]{eskin2011lectures}:
	\begin{align}\label{scattering wave amp}
		u_g\sim e^{ikx\cdot w}+\frac{a_g(k,\theta,w)e^{ik|x|}}{|x|^{\frac{d-1}{2}}}+ \mathcal{O}\left( \frac{1}{|x|^{\frac{d+1}{2}}}\right),
	\end{align}
	where $\theta=\frac{x}{|x|}$.
	\medskip
	the function $a_g(k,\theta,w)$ is called the scattering amplitude. It measures, roughly speaking, the amplitude of the radial scattered wave which resulted from the interaction of the incident plane wave $e^{ikx\cdot w}$ with the perturbation of the Euclidean metric given by $g$. The inverse scattering problem is whether one can determine the metric $g$ from $a_g$ i.e., to study the non-linear map sending $g$ to $a_g$.
	\medskip
	We are interested in the ``fixed frequency" inverse scattering problem, that by measuring the scattering amplitude at a fixed frequency $k$ for all angles $w,\theta\in\mathbb{S}^{d-1}$. The scattering amplitude $a_g(k,\theta,w)$ depends on $2d-2$ variables. It is well known (see for instance \cite{uhlmann1992inverse}) that knowledge of $a_g(k,\theta,w)$ determines the set of Cauchy data
	\begin{align}\label{cauchy data}
		\mathcal{C}_{g,k}=\left\{ \left(u|_{\partial B}, \frac{\partial u}{\partial \nu}|_{\partial B}\right), \ \text{with}\ u\in H^2(B) \ \text{solution of}\ \ \Delta_g \,u+k^2u=0 \ \ \ \mbox{in}\ \ B.\right\}
	\end{align}
	for the Laplacian-Beltrami operator on $B$, where $B$ denotes the ball of radius $R$, centered at origin.
	\medskip
	Note that if $k$ is not a Dirichlet eigenvalue for the Laplace-Beltrami operator then the set of Cauchy data is the graph of the Dirichlet to Neumann map $\Lambda_{g,k}$. For more discussion on the anisotropic inverse scattering problem, we refer to \cite{uhlmann1992inverse} and \cite{isozaki2014recent}.
	
	As an application of Theorem \ref{equivalence result}, we discuss the related inverse problem of recovering the metric from the Cauchy data set (see \eqref{cauchy data}) for the fractional Helmholtz equation, and the inverse problem can be solved in a manner analogous to the local case. This analysis is presented in the final section of the article.
	\begin{rem}
		We note that this inverse scattering problem falls under the fixed-frequency case (with $k$ fixed). To the best of our knowledge, this is the first work addressing such a problem in the context of the fractional Laplace--Beltrami operator. For the variable-frequency case, we refer the readers to \cite{das2025inverse, das2026nonlocalnonlinearinversescattering}, where the authors recover the potential $V$ or the nonlinearity $f$ from the following equation
		\[
		(-\Delta)^s u-k^{2s}u -Vu=f(x,u), \ \text{in}\ \mathbb{R}^d,
		\]
		in Euclidean setting. 
	\end{rem}
	Theorem \ref{equivalence result} can be generalized to the relativistic Schr\"odinger operator $(-\Delta_g+\mu)$, where $\mu>0$ and $s\in(0,1)$, by employing essentially the same arguments. We note that the relativistic Schr\"odinger operator arises in the modeling of a variety of physical phenomena, including phase transitions, flame propagation, chemical reactions in liquids, and the Hamiltonian governing the motion of a free relativistic particle. This operator has attracted considerable attention in the mathematical literature, particularly in the study of Carleman estimates \cite{roncal2023carleman}, scattering theory \cite{ishida2020propagation}, and the analysis of relativistic operators \cite{lammerzahl1993pseudodifferential, ambrosio2022fractional, ambrosio2022note}. For additional background and developments, we refer the reader to \cite{byczkowski2009bessel, carmona1990relativistic, grzywny2008two}.
	
	There is also an extensive body of work devoted to the qualitative and analytical properties of solutions to the model equation
	\[
	(-\Delta + \mu)^s u = h(x,u),
	\]
	where $h(\cdot,\cdot)$ is a sufficiently regular nonlinearity arising from various physical applications. We refer the reader to \cite{ambrosio2022fractional, ambrosio2024concentration, byczkowski2009bessel, zelati2011existence, felmer2015scalar, ikoma2017existence} and the references therein for further details. We now present the analogue of the eigenfunction equivalence result for the relativistic Schr\"odinger equation.
	\begin{thm}\label{equivalence result, relativistic}
		Let $0<s<1$, $\mu>0$ and $k>0$. Let $g$ be a smooth uniformly elliptic metric on
		$\mathbb{R}^d$, with bounded derivatives as above. If
		$u\in L^{\infty}(\mathbb{R}^d)$ satisfies
		\[
		-(-\Delta_g+\mu)^s u+(k^{2s}+\mu)u=0
		\qquad \text{in } \mathcal{S}'(\R^d),
		\]
		then $u$ solves the classical anisotropic Helmholtz equation
		\[
		\Delta_g u+k^2u=0
		\qquad \text{in }\R^d
		\]
		in the sense of distributions.
	\end{thm}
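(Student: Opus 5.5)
The plan is to reduce the nonlocal equation to a local one by factoring out the linear factor of a scalar function of $P:=-\Delta_g+\mu$. First I record a caveat: this argument yields a Helmholtz equation whose frequency is generally \emph{not} $k$.

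Set $P:=P_g+\mu$. It is a positive uniformly elliptic second order operator with spectrum in $[\mu,\infty)$. Let $c:=k^{2s}+\mu$ and $\lambda_0:=c^{1/s}$. Write
\[
\lambda^s-c=(\lambda-\lambda_0)\,h(\lambda),\qquad h(\lambda):=\frac{\lambda^s-\lambda_0^s}{\lambda-\lambda_0}\quad (h(\lambda_0)=s\lambda_0^{s-1}).
\]
On $[\mu,\infty)$ the function $h$ is smooth and strictly positive, and it behaves like a classical symbol of order $s-1$ in $\lambda$, with $h(\lambda)\simeq(1+\lambda)^{s-1}$.

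By Seeley's construction, or by the Helffer--Sj\"ostrand functional calculus, $h(P)\in\Psi^{2s-2}_{\mathrm{cl}}$ is elliptic and $h(P)^{-1}=(1/h)(P)\in\Psi^{2-2s}_{\mathrm{cl}}$. Since all of these operators are functions of the single operator $P$, they commute, and
\[
P^s-c=h(P)\,(P-\lambda_0)=(P-\lambda_0)\,h(P).
\]

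Next I extend these identities from $L^2$ to the space where $u$ lives. A function $u\in L^\infty$ lies in the weighted Sobolev space $\langle x\rangle^{N}L^2$ for $N>d/2$. Because $g$ has bounded derivatives of all orders, the symbols above lie in $S^m_{1,0}$ with uniform bounds. Such operators map $\langle x\rangle^{N}H^{t}$ to $\langle x\rangle^{N}H^{t-m}$, since commutators with the weight only produce lower order terms. The factorization holds on $\mathcal S$ and, by density and duality, on these weighted spaces. Applying $h(P)^{-1}$ to $(P^s-c)u=0$ then gives
\[
(P-\lambda_0)u=0,\qquad\text{that is}\qquad \Delta_g u+\kappa^2u=0,\quad \kappa^2:=(k^{2s}+\mu)^{1/s}-\mu .
\]
I expect the main technical obstacle to be justifying the functional calculus and the composition identities on non-$L^2$ data. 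The first thing I would try is the weighted $L^2$ mapping argument just described.

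Finally I compare $\kappa$ with $k$. The conclusion as the statement words it, $\Delta_g u+k^2u=0$, follows exactly when $\kappa=k$, i.e.\ when $(k^2+\mu)^s=k^{2s}+\mu$. For $0<s<1$ and $\mu>0$ this identity generally fails: for instance, $\lambda\mapsto\lambda^s$ is strictly concave with $0^s=0$, hence strictly subadditive, so $(k^2+\mu)^s<k^{2s}+\mu^s$; taking $\mu=1$ gives $(k^2+1)^s<k^{2s}+1$.

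When $\kappa\neq k$, a function solving both Helmholtz equations satisfies $(k^2-\kappa^2)u=0$, so $u=0$. Yet nonzero bounded solutions of the hypothesis exist. In the Euclidean case, the plane wave $e^{i\kappa x\cdot w}$ satisfies $(-\Delta+\mu)^s e^{i\kappa x\cdot w}=(\kappa^2+\mu)^s e^{i\kappa x\cdot w}=(k^{2s}+\mu)e^{i\kappa x\cdot w}$.

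So the argument proves the statement with $k^2$ replaced by $\kappa^2$. With the constant exactly as written, it holds only in the degenerate case $(k^2+\mu)^s=k^{2s}+\mu$. I would therefore flag the constant in the hypothesis, which presumably should read $(k^2+\mu)^s$, rather than claim the statement verbatim.
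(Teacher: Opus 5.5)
Your objection is correct, and it exposes an error in the paper rather than a gap in your argument. Take $g$ Euclidean and $\mu=1$, both admissible under the hypotheses. Then $u=e^{i\kappa x\cdot w}$ with $\kappa^2=(k^{2s}+1)^{1/s}-1$ is bounded. It satisfies $(-\Delta+1)^s u=(\kappa^2+1)^s u=(k^{2s}+1)u$ in $\mathcal{S}'(\R^d)$, because $(-\Delta+1)^s$ is the Fourier multiplier $(|\xi|^2+1)^s$. Your subadditivity bound gives $\kappa>k$, so $\Delta u+k^2u=(k^2-\kappa^2)u\neq 0$.

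The paper never actually proves this theorem. It only says it ``follows by a similar argument'' from Theorem~\ref{equivalence result}, and that is exactly where things break. The analogue of $b_k$ would be $(\lambda-k^2)/\bigl((\lambda+\mu)^s-k^{2s}-\mu\bigr)$ in the spectral variable $\lambda$ of $-\Delta_g$. Its denominator vanishes at $\lambda=\kappa^2$, while its numerator vanishes at $\lambda=k^2$. So for $\kappa\neq k$ there is a genuine pole, the counterpart of Lemma~\ref{smoothness of bk} fails, and no factorization through $-\Delta_g-k^2$ exists. For the same reason, the paper's remark that $\mu$ plays no role is false for the hypothesis as printed. Replacing $k^{2s}+\mu$ by $(k^2+\mu)^s$, as you suggest, makes the two zeros coincide and is the right correction.

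For the corrected statement, or equivalently your version with $\kappa$, your proof is the paper's proof of Theorem~\ref{equivalence result} transplanted to $P=-\Delta_g+\mu$. Here $(1/h)(P)$ plays the role of $B_k$, and the factorization is the same. The only real difference is how you pass to $u\in L^\infty$. You apply $h(P)^{-1}$ directly to $u$ on weighted spaces $\langle x\rangle^{N}H^{t}$. The paper instead tests the equation against $B_k\phi$ with $\phi\in\mathcal{S}(\R^d)$. That needs only $B_k:\mathcal{S}\to\mathcal{S}$ and the identity on $\mathcal{S}$, so it is the duality form of your step and avoids weighted mapping estimates.

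Two small precisions. First, for $\mu<1$ and small $k$ one can have $\kappa^2\le 0$, so calling $\kappa$ a frequency is loose there; your counterexample uses $\mu=1$ and is unaffected. Second, $h$ expands in the powers $\lambda^{s-1-m}$ and $\lambda^{-1-m}$, so $h(P)$ lies in $S^{2s-2}_{1,0}$ but is not classical with integer steps. The same is true of the paper's $B_k$. This is harmless, since only the $S^{m}_{1,0}$ bounds are used.
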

	\begin{rem}
		Note that the parameter $\mu$ does not play any role in the local Helmholtz equation considered in the above theorem.
	\end{rem}

	\subsection{Fractional powers of the Laplace--Beltrami operator}
	
	There are several equivalent ways to define fractional powers of positive self-adjoint operators.
	In the present anisotropic setting, the most natural framework is the spectral functional calculus
	together with Seeley's construction of complex powers of elliptic operators \cite{Seeley1967}.
	Let
	\[
	P_g=-\Delta_g.
	\]
	Since \(P_g\) is a positive uniformly elliptic differential operator of order two,
	\[
	P_g\in \Psi^2_{\mathrm{cl}}(\mathbb R^d).
	\]
	Seeley's theorem implies that, for every \(z\in\mathbb C\),
	\[
	P_g^z\in \Psi^{2z}_{\mathrm{cl}}(\mathbb R^d).
	\]
	In particular, for \(0<s<1\),
	\[
	(-\Delta_g)^s=P_g^s\in \Psi^{2s}_{\mathrm{cl}}(\mathbb R^d),
	\]
	with principal symbol
	\[
	\sigma_{\mathrm{pr}}(P_g^s)(x,\xi)
	=
	\bigl(g^{ij}(x)\xi_i\xi_j\bigr)^s.
	\]
	
	The construction is based on the contour-integral representation
	\[
	P_g^{-z}
	=
	\frac{1}{2\pi i}
	\int_\Gamma
	\lambda^{-z}(P_g-\lambda)^{-1}\,d\lambda,
	\qquad \Re z>0,
	\]
	where \(\Gamma\) surrounds the spectrum of \(P_g\). The parameter-dependent symbolic
	expansion of the resolvent \((P_g-\lambda)^{-1}\) implies that \(P_g^z\) is again a classical
	pseudodifferential operator, whose principal symbol is the complex power of the principal
	symbol of \(P_g\).
	
	For comparison, in the Euclidean case \(g^{ij}=\delta_{ij}\), one recovers the Fourier multiplier identity
	\[
	\widehat{(-\Delta)^s f}(\xi)=|\xi|^{2s}\widehat f(\xi).
	\]
	For a variable metric, however, the symbol depends on \(x\), and therefore the argument
	cannot rely solely on the Fourier transform. The Fourier multiplier calculus is replaced by
	the pseudodifferential symbolic calculus for elliptic operators.
	
	The scalar function
	\[
	b_k(\lambda)=\frac{\lambda-k^2}{\lambda^s-k^{2s}}
	\]
	extends smoothly through \(\lambda=k^2\) (for details see Appendix, Lemma \ref{smoothness of bk}), since
	\[
	\lim_{\lambda\to k^2}\frac{\lambda-k^2}{\lambda^s-k^{2s}}
	=
	\frac{1}{s k^{2s-2}}.
	\]
	Hence
	\[
	B_k:=b_k(P_g)
	\]
	is a classical pseudodifferential operator of order \(2-2s\) (for details see Appendix, Lemma \ref{psedodifferential operator of degree}). Its principal symbol is
	\[
	\sigma_{\mathrm{pr}}(B_k)(x,\xi)
	=
	\frac{\sum_{i,j=1}^d g^{ij}(x)\xi_i\xi_j-k^2}
	{\bigl( \sum_{i,j=1}^dg^{ij}(x)\xi_i\xi_j\bigr)^s-k^{2s}},
	\]
	with the same smooth extension at the characteristic set
	\(\displaystyle{\sum_{i,j=1}^d g^{ij}(x)\xi_i\xi_j=k^2}\). By functional calculus,
	\begin{equation}\label{factorization_identity}
		P_g-k^2=(P_g^s-k^{2s})B_k.
	\end{equation}
	This factorization replaces the elementary Fourier multiplier factorization available in the Euclidean setting.

	\section{Proof of the theorem~\ref{equivalence result} and Theorem \ref{equivalence result, relativistic}}
	
	We prove only Theorem~\ref{equivalence result}. The proof of Theorem~\ref{equivalence result, relativistic} follows by a similar argument. We prove Theorem~\ref{equivalence result} using the factorization~\eqref{factorization_identity}. Let $u$ solve
	\[
	(P_g)^s u-k^{2s}u=0
	\]
	in the sense of distributions. Let $\phi\in\mathcal{S}(\mathbb{R}^d)$. Since
	$B_k\in\Psi^{2-2s}_{\mathrm{cl}}$, we have \cite{Hormander1985, Taylor1991}
	\[
	B_k\phi\in\mathcal{S}(\mathbb{R}^d).
	\]
	Therefore the fractional equation gives
	\[
	\big\langle (P_g)^s u-k^{2s}u, B_k\phi\big\rangle=0.
	\]
	Using the exact functional-calculus identity
	\[
	P_g-k^2=(P_g^s-k^{2s})B_k,
	\]
	we obtain
	\[
	\big\langle u,(P_g-k^2)\phi\big\rangle=0.
	\]
	Since $\phi\in\mathcal{S}(\mathbb{R}^d)$ was arbitrary, this proves
	\[
	(P_g-k^2)u=0
	\]
	in the sense of distributions. Recalling that $P_g=-\Delta_g$, we get
	\[
	(-\Delta_g-k^2)u=0,
	\]
	or equivalently
	\[
	\Delta_g u+k^2u=0.
	\]
	This completes the proof of Theorem~\ref{equivalence result}.

	\section{ Scattering Problem}
	Since $-\Delta_g-k^2$ is an elliptic pseudodifferential operator of order $2$, it is hypoelliptic \cite{Hormander1985, Taylor1991}. Hence, any distributional solution $u$ of
	$(-\Delta_g-k^2)u=0$ is smooth.
	Thanks to the equivalence result in Theorem~\ref{equivalence result}, the inverse scattering problem for the fractional Laplace--Beltrami Helmholtz equation 
	\[
	-(-\Delta_g)^s u+k^{2s}u=0, \ \ \text{in}\ \R^d, \ \ 0<s<1,\quad k>0 \ \text{is fixed},
	\]
	can be formulated in terms of inverse scattering problem for classical Helmholtz equation
	\[
	\Delta_g u +k^2u =0, \ \ \text{in}\ \R^d, \ \ k>0 \ \text{is fixed}.
	\]
	Let us assume the metric $g$ is Euclidean outside an Euclidean ball, i.e., $(g_{ij}(x)=\delta_{ij})$ outside the set $\{0\leq |x|< R\}$, for some $R>0$. Recall that  \eqref{scattering wave amp},
	\[
	u\sim e^{ikx\cdot w}+ \frac{a_g(k,\theta,w)e^{ik|x|}}{|x|^{\frac{d-1}{2}}}+\mathcal{O}\left(\frac{1}{|x|^{\frac{d+1}{2}}}\right), \ \ \text{where} \ \theta=\frac{x}{|x|},
	\]
	\subsection{Fixed Frequency Inverse Scattering Problem}
	The inverse problem states whether the metric $g$ can be recovered from the scattering amplitude function $a_g(k,\theta,w)$. This is known that the knowledge of $a_g(k,\theta,w)$ determines the Cauchy data set \cite{uhlmann1992inverse}
	\[
	\mathcal{C}_{g,k}=\left\{ \left(u|_{\partial B}, \frac{\partial u}{\partial \nu}|_{\partial B}\right), \ \text{with}\ u\in H^2(B) \ \text{solution of}\ \ \Delta_g \,u+k^2u=0 \ \ \ \mbox{in}\ \ B,\right\}
	\]
	where $B$ denotes the ball of radius $R$, centered at origin. Therefore, the question is now whether $\mathcal{C}_{g,k}$ determines $g$ or not. In the next section, we briefly discuss this using results from various previous works.
	
	
	It is easy to see that it is not possible to determine the metric uniquely from this information. Let $\Psi$ be a smooth diffeomorphism  of $\R^d$ which is the identity outside $B$. We define $v=u\circ \Psi^{-1}$. A straightforward calculation shows that $v$ satisfies 
	\begin{align}\label{conformal equation}
		\Delta_{\Psi*g}v+k^2 v=0 \ \ \text{in} \ \R^d, \ k>0 \ \text{fixed},
	\end{align}
	where $\Psi*g$ denotes the pull back of the metric $g$ under the diffeomorphism $\Psi$, that is
	\[
	\Psi*g= (D\Psi\circ g\circ D\Psi^T)\circ \Psi^{-1},
	\]
	and the Cauchy data
	\[
	\mathcal{C}_{\Psi*g,k}=\left\{ \left(v|_{\partial B}, \frac{\partial v}{\partial \nu}|_{\partial B}\right), \ \text{with}\ u\in H^2(B) \ \text{solution of}\ \ \Delta_{\Psi\ast g} \,v+k^2v=0 \ \ \ \mbox{in}\ \ B,\right\}
	\]
	becomes same with $\mathcal{C}_{g,k}$, i.e.,
	\begin{align}\label{conformal equivalence of Cauchy data set}
		\mathcal{C}_{\Psi*g,k}= \mathcal{C}_{g,k}.
	\end{align}
	The natural conjecture is that \eqref{conformal equivalence of Cauchy data set} is the only obstruction to uniqueness. We will comment on that shortly. 
	
	\medskip
	
	In general, the problem remains open for smooth metrics in dimensions \(d \geq 3\). In dimension \(d = 2\), the problem was addressed in \cite{sylvester1991inverse}, where the authors showed that the Cauchy data \(\mathcal{C}_{g,k}\) uniquely determines the metric \(g\) within the class of metrics conformal to the Euclidean metric. For dimensions \(d \geq 3\), only partial results are known. We refer the reader to \cite{lee1989determining}, where it was shown that \(\mathcal{C}_{g,k}\) uniquely determines \(g\) in the class of real-analytic metrics. The linearization of this problem was studied in \cite{sylvester1987global}.

	The two dimensional case is relatively easier thanks to the reduction of the problem to the isotropic case through the isothermal coordinate transformation \cite{ahlfors1966quasiconformal}. Using the change of variable, the Laplace-Beltrami operator  can be transformed to a conformal multiple of the standerd Laplacian (with respect to Euclidean metric). Thus we can transform \eqref{local helmoholtz eq} into
	\[
	c^2(x)\Delta+k^2
	\]
	with $c$ positive and equal to $1$ outside $B$. However, the recovery of the function $c$ from the scattering amplitude at a non zero fixed energy is still unsolved for general smooth function $c$. There are few known result under certain assumptions on $c$. We would like to refer the readers to \cite{sylvester1986uniqueness, sun1991generic, sun1990generic}, where the authors recover $c$ under a priori assumption that it is small or for some generic set of $c$'s.  The anisotropic conductivity equation, which is the analogue of \eqref{local helmoholtz eq} is given by
	\[
	\sum\limits_{i,j=1}^2 \frac{\partial}{\partial x_i} \gamma_{i,j}\frac{\partial u_{\gamma}}{\partial x_j}+k^2 u_{\gamma}=0
	\]
	with $\gamma=(\gamma^{i,j})$ a positive definite, symmetric smooth matrix which is identity outside $B$. The question remains same, at a fixed energy level, whether the Cauchy data $\mathcal{C}_{g,k}$ determines $\gamma$ uniquely up to conjugation by a group of diffeomorphism which is identity on the boundary of $B$ . Thanks to the work in \cite{ahlfors1966quasiconformal}, we again can transform this problem into an isotropic one  \cite{stein1970singular}. At zero energy level i.e., when $k=0$ the isotropic problem was solved in \cite{nachman1996global, brown1997uniqueness}, where in the later article, the problem was addressed with less regular conductivities. Uniqueness of such conductivities has not been proved yet at positive enegy level. Uniqueness has been proven only for small enough conductivities or for generic conductivities \cite{sun1991generic, sun1990generic}.

	\section{Appendix}
	We analyze the behaviour of the function $\displaystyle{b_k(\lambda)=\frac{\lambda-k^2}{\lambda^s-k^{2s}}}$. Recall that $k\neq 0$ and $s\in(0,1)$. We start with the following lemma.
	\begin{lem}\label{smoothness of bk}
		The function $\displaystyle{b_k(\lambda)=\frac{\lambda-k^2}{\lambda^s-k^{2s}}}$ is smooth for all $\lambda\geq 0$.
	\end{lem}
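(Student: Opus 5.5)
The plan is to remove the apparent singularity at $\lambda=k^2$ with an integral (Hadamard-type) representation of the denominator, and then to examine the endpoint $\lambda=0$ separately. That endpoint turns out to be the real issue.

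\emph{Step 1: factor the denominator.} For $\lambda\ge 0$ the fundamental theorem of calculus applied to $\mu\mapsto\mu^s$ along the segment from $k^2$ to $\lambda$ gives
\[
\lambda^s-k^{2s}=(\lambda-k^2)\,h(\lambda),\qquad
h(\lambda):=\int_0^1 s\bigl(k^2+t(\lambda-k^2)\bigr)^{s-1}\,dt .
\]
For $\lambda>0$ the point $k^2+t(\lambda-k^2)$ lies in the closed interval between $\lambda$ and $k^2$. That interval is a compact subset of $(0,\infty)$, so the integrand is strictly positive and smooth in $\lambda$, uniformly in $t\in[0,1]$. Differentiating under the integral sign then shows that $h\in C^\infty(0,\infty)$ with $h>0$. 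Hence $b_k=1/h$ on $(0,\infty)\setminus\{k^2\}$, and the right-hand side is a smooth positive function on all of $(0,\infty)$. The removable value is $b_k(k^2)=1/h(k^2)=1/(s k^{2s-2})$, in agreement with the limit quoted in the introduction. This step is routine.

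\emph{Step 2: the endpoint $\lambda=0$.} This is where I expect the main obstacle, because I believe the statement needs adjusting here. At $\lambda=0$ we have $h(0)=\int_0^1 s\,k^{2s-2}(1-t)^{s-1}\,dt=k^{2s-2}$, which is finite. So $b_k$ extends continuously to $[0,\infty)$ with $b_k(0)=k^{2-2s}$, which also follows directly from the formula. However, $b_k$ is not differentiable at $0$. Directly,
\[
b_k'(\lambda)=\frac{1}{\lambda^s-k^{2s}}-\frac{s\lambda^{s-1}(\lambda-k^2)}{(\lambda^s-k^{2s})^2},
\]
and the second term tends to $-\infty$ as $\lambda\to0^+$, since $s<1$. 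Equivalently, $h'(0^+)$ involves $\int_0^1 t(1-t)^{s-2}\,dt=\infty$. So what I would actually prove is the following: $b_k\in C^\infty(0,\infty)$, $b_k\in C^{0,s}([0,\infty))$, and $b_k>0$ on $[0,\infty)$. I would also record that the failure of smoothness is localized at $\lambda=0$ and comes from the $\lambda^s$ term.

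\emph{Step 3: consequences for $B_k=b_k(P_g)$.} For the use made of $B_k$ in the main argument, I would handle the singularity at $0$ with a cutoff. Choose $\chi\in C^\infty_c([0,\infty))$ with $\chi=1$ near $0$ and $\chi=0$ near $[k^2/2,\infty)$. Then $(1-\chi)b_k$ is smooth on $[0,\infty)$, and by the Step 1 formula it is a symbol in $\lambda$ of order $1-s$ as $\lambda\to\infty$. The part $\chi b_k$ is where the genuine non-smoothness sits. Since $0$ lies in the spectrum of $P_g$ on $\mathbb R^d$, this part cannot simply be discarded.

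For $\chi b_k$ I would first try to avoid needing smoothness at $0$ at all. The idea is to note that the spectral measure of a bounded Helmholtz solution should concentrate at $\lambda=k^2$, where $b_k$ is smooth. If that cannot be made rigorous, the fallback is to weaken the lemma to $\lambda>0$ together with Hölder continuity at $0$, which is the honest statement.
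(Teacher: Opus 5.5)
Your analysis is correct. The defect you found is in the statement and in the paper's proof, not in your argument. The paper's proof opens with ``it is evident that the function is smooth on $[0,\infty)\setminus\{k^2\}$'' and then examines only $\lambda=k^2$; that first assertion is false at $\lambda=0$. For small $\lambda$,
\[
b_k(\lambda)=k^{2-2s}\,\frac{1-\lambda k^{-2}}{1-\lambda^{s}k^{-2s}}
=k^{2-2s}+k^{2-4s}\lambda^{s}+O\bigl(\lambda^{\min(2s,1)}\bigr),
\]
so $b_k'(\lambda)\sim s\,k^{2-4s}\lambda^{s-1}\to+\infty$ as $\lambda\to0^+$. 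Once the minus sign in your formula for $b_k'$ is included, the blow-up is towards $+\infty$ rather than $-\infty$, but this does not affect the conclusion. What is true is exactly what you prove. On $(0,\infty)$, $b_k$ is positive and smooth, indeed real-analytic, which is also what the paper's own Remark after the lemma asserts. On $[0,\infty)$ it is continuous, with $b_k(0)=k^{2-2s}$ and $b_k(\lambda)-b_k(0)=O(\lambda^s)$ near $0$. Write $C^{0,s}_{\mathrm{loc}}$ rather than $C^{0,s}([0,\infty))$, since $b_k$ grows like $\lambda^{1-s}$.

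At $\lambda=k^2$ your route and the paper's agree in substance. The paper writes $\lambda=k^2+h$ and expands $(1+h/k^2)^s$ binomially. It then observes that $(\lambda^s-k^{2s})/(\lambda-k^2)$ is a power series in $\lambda-k^2$ that does not vanish at $\lambda=k^2$; that series is exactly your integral $h(\lambda)$ Taylor-expanded about $k^2$. This gives analyticity, but the disc of convergence $|\lambda-k^2|<k^2$ is cut off precisely by the branch point at $\lambda=0$, which is another way to see the endpoint failure. Your integral representation covers all of $(0,\infty)$ at once and gives $b_k=1/h>0$ directly; the paper instead checks positivity separately by a sign argument in its ellipticity lemma.

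Your caution in Step 3 is warranted, because the endpoint defect propagates. Near $\xi=0$ the symbol $b_k(q(x,\xi))$ contains $k^{2-4s}q(x,\xi)^s$. Hence the paper's estimate $|\partial_x^\alpha\partial_\xi^\beta b_k(q)|\lesssim\langle\xi\rangle^{2-2s-|\beta|}$ fails near $\xi=0$ once $|\beta|\ge2$. Already for $g_{ij}=\delta_{ij}$, $\widehat{B_k\phi}=b_k(|\xi|^2)\widehat\phi$ is not smooth at $\xi=0$ when $\widehat\phi(0)\neq0$, so $B_k\phi\notin\mathcal S(\R^d)$ in general. Your suggested remedy, that the spectral measure of $u$ concentrates at $k^2$, is only heuristic as it stands, since a bounded $u$ is not in $L^2$. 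That issue bears on the main theorem rather than on this lemma, for which your corrected statement and proof are right.
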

	\begin{proof}
		It is evident that the function is smooth on $[0,\infty)\setminus{k^2}$. Using L'Hospital's rule, we can extend the function continuously to $\lambda = k^2$ by defining
		\[
		b_k(\lambda)=\lim_{\lambda\to k^2}\frac{\lambda-k^2}{\lambda^s-k^{2s}}
		=\frac{1}{sk^{2(s-1)}}.
		\]
		We now determine the behavior of $b_k(\lambda)$ when $|\lambda-k^2|\ll 1$. Let $\lambda=k^2+h$, then
		\[
		b_k(\lambda)=\frac{h}{(k^2+h)^s-k^{2s}}= \frac{h}{k^{2s}(1+\frac{h}{k^2})^s-k^{2s}}.
		\]
		Since $\displaystyle{\left|\frac{h}{k^2}\right|\ll 1}$, the above expression can be written as
		\[
		b_k(\lambda)=\frac{h}{k^{2s}\sum\limits_{n=1}^{\infty} \frac{s(s-1)\cdots(s-n+1)}{n!}\left(\frac{h}{k^2}\right)^n}= \frac{1}{k^{2s}\sum\limits_{n=1}^{\infty} \frac{s(s-1)\cdots(s-n+1)}{n!}\frac{h^{n-1}}{k^{2n}}}.
		\]
		The series $\displaystyle{k^{2s}\sum\limits_{n=1}^{\infty} \frac{s(s-1)\cdots(s-n+1)}{n!}\frac{h^{n-1}}{k^{2n}}}$ is analytic when $\displaystyle{\left| \frac{h}{k^2}\right|\ll 1}$ and non zero when $h=0$. Hence $b_k$ is in fact analytic around $h=0$ i.e., around $\lambda=k^2$. Hence the function $b_k(\lambda)$ is smooth for all $\lambda\geq 0$.
	\end{proof}
	\begin{rem}\label{analyticity of bk}
		Since \(b_k(\lambda)\) admits an analytic extension at \(\lambda = k^2\), and for \(\lambda \neq k^2\) it is the quotient of two analytic functions whose denominator does not vanish, we can conclude that \(b_k(\lambda)\) is analytic on $(0,\infty)$.
	\end{rem}
	The next result addresses the asymptotic behaviours of derivatives of $b_k(\lambda)$ as $\lambda\to \infty$.
	\begin{lem}\label{decay of derivatives}
		Let $l\in\mathbb{N}\cup\{0\}$, then $\displaystyle{\left|D^l(b_k)(\lambda)\right|\lesssim |\lambda|^{1-s-l}}$, as $\lambda\to\infty$.
	\end{lem}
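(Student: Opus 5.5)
We plan to prove the bound by treating $b_k$ near infinity as a function of $\mu=\lambda^{-1}$ and expanding it into a convergent series of powers $\lambda^{1-s-j}$. For large $\lambda$ (say $\lambda\geq 2^{1/s}k^2$, so that $k^{2s}\lambda^{-s}\leq 1/2$), we factor
\[
b_k(\lambda)=\frac{\lambda-k^2}{\lambda^s-k^{2s}}=\lambda^{1-s}\,\frac{1-k^2\lambda^{-1}}{1-k^{2s}\lambda^{-s}}.
\]
The denominator is expanded as a geometric series,
\[
\frac{1}{1-k^{2s}\lambda^{-s}}=\sum_{n=0}^{\infty}k^{2sn}\lambda^{-sn},
\]
which converges uniformly together with all its termwise derivatives on $\lambda\geq 2^{1/s}k^2$. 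Multiplying out gives
\[
b_k(\lambda)=\sum_{n=0}^\infty k^{2sn}\lambda^{1-s-sn}-\sum_{n=0}^\infty k^{2sn+2}\lambda^{-s-sn}.
\]
This is a series of pure powers $\lambda^{\alpha}$ whose exponents are all at most $1-s$.

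The next step is to differentiate term by term. Each term satisfies $D^l\lambda^{\alpha}=\alpha(\alpha-1)\cdots(\alpha-l+1)\lambda^{\alpha-l}$. For $\lambda$ large, $\lambda^{\alpha-l}\leq \lambda^{1-s-l}$ because $\alpha\leq 1-s$. It therefore suffices to check that
\[
\sum_n |c_n|\,|\alpha_n(\alpha_n-1)\cdots(\alpha_n-l+1)|\,\lambda^{\alpha_n-(1-s)}
\]
is bounded for $\lambda\geq \lambda_0$.

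This summability is the main obstacle. The falling factorial in $\alpha_n=1-s-sn$ grows polynomially in $n$, roughly like $(sn)^l$. On the other hand, $|c_n|\lambda^{\alpha_n-(1-s)}=(k^{2s}\lambda^{-s})^n$ up to the constant factor $k^2\lambda^{-1}$ in the second series. Taking $\lambda_0$ so large that $q:=k^{2s}\lambda_0^{-s}\leq 1/2$, the sum is dominated by
\[
\sum_n C_l(1+n)^l q^n<\infty,
\]
uniformly in $\lambda\geq\lambda_0$. This same domination justifies termwise differentiation, since the differentiated series converges uniformly on $[\lambda_0,\infty)$.

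Combining these steps gives $|D^l b_k(\lambda)|\leq C_{l,k,s}\lambda^{1-s-l}$ for $\lambda\geq\lambda_0$, which is the claim. As an alternative to the series, one could write $b_k(\lambda)=\lambda^{1-s}F(\lambda^{-1},\lambda^{-s})$ with $F(a,b)=(1-k^2a)/(1-k^{2s}b)$ smooth near $(0,0)$, and apply the chain rule, noting that each derivative of $\lambda^{-1}$ or $\lambda^{-s}$ gains a factor $\lambda^{-1}$. The series route is more explicit, so we would use it.
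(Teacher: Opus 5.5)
Your proof is correct, but it takes a different route from the paper's. The paper never expands $b_k$ at infinity. It applies the Leibniz rule to $(\lambda-k^2)\cdot(\lambda^s-k^{2s})^{-1}$, so only $(\lambda-k^2)D^l(\lambda^s-k^{2s})^{-1}$ and $l\,D^{l-1}(\lambda^s-k^{2s})^{-1}$ survive. It then proves by induction that $D^p(\lambda^s-k^{2s})^{-1}$ is a finite sum of real powers $\lambda^{\alpha}$ with $\alpha\le p(s-1)$, divided by $(\lambda^s-k^{2s})^{p+1}$. Over the common denominator $(\lambda^s-k^{2s})^{l+1}\sim\lambda^{s(l+1)}$, the numerator has top exponent at most $l(s-1)+1$, and counting exponents gives $\lambda^{1-s-l}$.

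That argument is finite and algebraic, with nothing to converge, but it depends on exponent bookkeeping in the induction. The printed induction step even has the denominator exponent off by one; it should be $p+2$. You trade this bookkeeping for an analytic step: the domination by $\sum_n C_l(1+n)^l q^n$ with $q\le 1/2$. You handle it correctly, and it also gives the locally uniform convergence needed for termwise differentiation.

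In return you get more than the bound: the full expansion $b_k(\lambda)=\sum_n k^{2sn}\lambda^{1-s-sn}-\sum_n k^{2sn+2}\lambda^{-s-sn}$ for $\lambda\ge\lambda_0$. After substituting $\lambda=g^{ij}(x)\xi_i\xi_j$, this shows the homogeneous components of the symbol, of degrees $2-2s-2sn$ and $-2s-2sn$. For instance, when $s\neq 1/2$ the second component (degree $2-4s$, coefficient $k^{2s}$) is not an integer step below $2-2s$, which the bare estimate does not reveal.

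Your alternative, $b_k(\lambda)=\lambda^{1-s}F(\lambda^{-1},\lambda^{-s})$ with the chain rule, is the shortest of the three. It isolates the mechanism behind the paper's exponent count: each derivative landing on $\lambda^{1-s}$, $\lambda^{-1}$ or $\lambda^{-s}$ costs one power of $\lambda$, while derivatives of $F$ stay bounded near $(0,0)$. It needs neither an induction nor a series.
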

	\begin{proof}
		The case $l=0$ is standard. Let us choose $l\geq 1$. Using Leibnitz rule, we have that
		\begin{align}
			D^l (b_k)(\lambda)=& \sum_{j=0}^l \begin{pmatrix}
				l \\j   
			\end{pmatrix} D^j (\lambda-k^2) D^{l-j} \left(\frac{1}{\lambda^s-k^{2s}}\right) \nonumber\\
			=& (\lambda-k^2) D^{l} \left(\frac{1}{\lambda^s-k^{2s}}\right)+ l D^{l-1} \left(\frac{1}{\lambda^s-k^{2s}}\right). \label{derivative structure, bk}
		\end{align}
		Let us find out $\displaystyle{D^{p} \left(\frac{1}{\lambda^s-k^{2s}}\right)}$, for some $p\in\mathbb{N}\cup\{0\}$. Our ansatz is 
		\begin{equation}\label{Ansatz, derivative}
			\left \{
			\begin{aligned}
				D^{p} & \left(\frac{1}{\lambda^s-k^{2s}}\right)= \frac{P_{ls-l}(\lambda)}{(\lambda^s-k^{2s})^{p+1}}, \  \text{where}\ P_{ps-p}(\lambda):=\sum_{\alpha\in \mathbb{I}_{\alpha}} a_{\alpha}\lambda^{\alpha}, \\
				& \alpha, a_{\alpha}\in \mathbb{R}, \mathbb{I}_{\alpha} \ \text{is a finite index set},\ \max_{\alpha}\left\{\mathbb{I}_{\alpha}\right\}\leq ps-p.
			\end{aligned}
			\right. 
		\end{equation}
		It is clearly true for $p=0$, we do it via induction. Let the above anastz is true for $p$. The following calculation holds
		\begin{align*}
			D^{p+1} \left(\frac{1}{\lambda^s-k^{2s}}\right)= \frac{P_{ps-p}'(\lambda)(\lambda^s-k^{2s})-s(p+1)\lambda^{s-1}P_{ps-p}(\lambda)}{(\lambda^s-k^{2s})^{p+1}}.
		\end{align*}
		Note that $P_{ps-p}'(\lambda)(\lambda^s-k^{2s})-s(p+1)\lambda^{s-1}P_{ps-p}(\lambda)$ can be written as $\displaystyle{\sum_{\alpha\in \mathbb{I}_{\alpha}} a_{\alpha}\lambda^{\alpha}}$, where $\alpha,a_{\alpha}\in\mathbb{R}$, and  $\mathbb{I}_{\alpha}$ is a finite index set with $\displaystyle{\max_{\alpha}\left\{\mathbb{I}_{\alpha}\right\}\leq (p+1)s-(p+1)}$. Hence the ansatz \eqref{Ansatz, derivative} holds true for all $p\in\mathbb{N}\cup\{0\}$. Let's look into the relation \eqref{derivative structure, bk}. The $l$'th derivative  of the function $b_k(\lambda)$ can be expressed in the following way
		\[
		D^l(b_k)(\lambda)=\frac{(\lambda-k^2)P_{ls-l}(\lambda)+l(\lambda^s-k^{2s})P_{(l-1)s-(l-1)}(\lambda)}{(\lambda^s-k^{2s})^{l+1}}.
		\]
		Note that $\displaystyle{(\lambda-k^2)P_{ls-l}(\lambda)+l(\lambda^s-k^{2s})P_{(l-1)s-(l-1)}(\lambda)}$ can be written as $\displaystyle{\sum_{\alpha\in \mathbb{I}_{\alpha}} a_{\alpha}\lambda^{\alpha}}$, where $\alpha,a_{\alpha}\in\mathbb{R}$, and  $\mathbb{I}_{\alpha}$ is a finite index set with $\displaystyle{\max_{\alpha}\left\{\mathbb{I}_{\alpha}\right\}\leq ls-l+1}$. Hence
		\[
		\left | D^l(b_k)(\lambda)\right| \lesssim | \lambda|^{ls-s+1-ls-s}=|\lambda|^{1-s-l}, \quad \mbox{as}\ \lambda \to \infty.
		\]
	\end{proof}
	One can define the operator $B_k=b_k(-\Delta_g)$ through  functional calculus with the principle symbol 
	\[
	\sigma_{\mathrm{pr}}(B_k)(x,\xi)
	=
	\frac{\sum_{i,j=1}^d g^{ij}(x)\xi_i\xi_j-k^2}
	{\bigl(\sum_{i,j=1}^d g^{ij}(x)\xi_i\xi_j\bigr)^s-k^{2s}},
	\]
	where $x,\xi\in \mathbb{R}^d$. We have the following lemma.
	\begin{lem}\label{towards functional calculus}
		Let $\bigl(g^{ij}(x)\bigr)_{i,j=1}^{d}$ be a uniformly elliptic, smooth, and bounded matrix-valued function whose derivatives of all orders are bounded. Then, for every pair of multi-indices $\alpha,\beta\in (\mathbb{N}\cup{0})^{d}$, the estimate
		\[
		\left|\partial_x^{\alpha}\partial_{\xi}^{\beta} b_k\left(\sum_{i,j=1}^d g^{ij}(x)\xi_i\xi_j\right)\right|
		\lesssim
		\langle \xi\rangle^{2-2s-|\beta|}
		\]
		holds uniformly for all $x,\xi\in\mathbb{R}^{d}$. Here,
		\[
		\langle \xi\rangle
		:=
		\left(1+\sum_{i=1}^{d}|\xi_i|^2\right)^{1/2}
		\]
		denotes the Japanese bracket. Moreover, for a multi-index
		$r=(r_1,\ldots,r_d)\in(\mathbb{N}\cup{0})^d$,
		we define
		\[
		|r|:=\sum_{i=1}^{d} r_i.
		\]
	\end{lem}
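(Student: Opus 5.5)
This is a chain rule estimate for the composition $b_k\circ q$, where
\[
q(x,\xi):=\sum_{i,j=1}^d g^{ij}(x)\xi_i\xi_j .
\]
I would combine three facts: the derivative bounds on $b_k$ from Lemma \ref{decay of derivatives}, the smoothness of $b_k$ on $[0,\infty)$ from Lemma \ref{smoothness of bk}, and the symbol bounds satisfied by $q$. The first step records those symbol bounds. By \eqref{uniform ellipticiy}, $\theta|\xi|^2\le q\le \Theta|\xi|^2$. Because the $g^{ij}$ and all their derivatives are bounded, and $q$ is a homogeneous quadratic polynomial in $\xi$, we get
\[
|\partial_x^{\alpha}\partial_\xi^{\beta}q(x,\xi)|\lesssim \langle\xi\rangle^{2-|\beta|},
\]
with the left side vanishing identically once $|\beta|\ge 3$. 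In short, $q\in S^2$.

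The second step splits the estimate into two regions. On $\{|\xi|\le C\}$ for a fixed $C$, the values of $q$ lie in the compact interval $[0,\Theta C^2]$. On that interval every derivative $D^l b_k$ is bounded, since $b_k$ is smooth on $[0,\infty)$ by Lemma \ref{smoothness of bk}. The multivariate chain rule (Faà di Bruno) then expresses $\partial_x^\alpha\partial_\xi^\beta (b_k\circ q)$ as a finite sum of terms $D^l b_k(q)$ multiplied by products of derivatives of $q$. Each such term is bounded on this region, and $\langle\xi\rangle\sim 1$ there, so the bound holds.

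The third step treats $|\xi|\ge C$, and this is the main point. Faà di Bruno gives
\[
\partial_x^{\alpha}\partial_\xi^{\beta}(b_k\circ q)=\sum_{l=1}^{|\alpha|+|\beta|} D^lb_k(q)\sum \prod_{m=1}^{l}\partial_x^{\alpha_m}\partial_\xi^{\beta_m}q .
\]
Here the inner sum runs over partitions with $\sum_m\alpha_m=\alpha$, $\sum_m\beta_m=\beta$, and $(\alpha_m,\beta_m)\neq 0$ for each $m$. (The $l=0$ term, present only when $\alpha=\beta=0$, is just $b_k(q)$.) Taking $C$ large enough that $q\ge\theta C^2$ lies in the range where Lemma \ref{decay of derivatives} applies, and using $q\sim|\xi|^2\sim\langle\xi\rangle^2$, we get
\[
|D^lb_k(q)|\lesssim \langle\xi\rangle^{2(1-s-l)} .
\]
The product of derivatives of $q$ is bounded by $\langle\xi\rangle^{\sum_m(2-|\beta_m|)}=\langle\xi\rangle^{2l-|\beta|}$. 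Multiplying, each term is $\lesssim\langle\xi\rangle^{2-2s-2l+2l-|\beta|}=\langle\xi\rangle^{2-2s-|\beta|}$, which is the claimed bound. Note that $x$-derivatives cost nothing, because they do not lower the $\xi$-order of $q$.

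The only delicate point is this bookkeeping: each extra derivative of $b_k$ gains a factor $\langle\xi\rangle^{-2}$, and this exactly cancels the growth $\langle\xi\rangle^{2}$ from each additional factor of $q$. As a result, only the $\xi$-derivatives reduce the order. Once the Faà di Bruno structure is written down, the remaining verification is routine. Uniformity in $x$ follows because all the constants depend only on $\theta$, $\Theta$, and the sup norms of finitely many derivatives of $g^{ij}$.
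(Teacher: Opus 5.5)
Your step 3 is correct and is essentially the paper's proof. The paper applies Fa\`a di Bruno in $\xi$, then Leibniz and Fa\`a di Bruno in $x$, and combines Lemma~\ref{decay of derivatives} with $q\sim|\xi|^2$ to get the same cancellation $\langle\xi\rangle^{2-2s-2l}\cdot\langle\xi\rangle^{2l-|\beta|}$. Your single mixed Fa\`a di Bruno formula is a cleaner version of that bookkeeping.

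The genuine gap is step 2: $b_k$ is not smooth at $\lambda=0$. For small $\lambda>0$,
\[
b_k(\lambda)=k^{2-2s}\,\frac{1-\lambda k^{-2}}{1-\lambda^{s}k^{-2s}}=k^{2-2s}+k^{2-4s}\lambda^{s}+O\bigl(\lambda^{\min(2s,1)}\bigr),
\]
so already $b_k'(\lambda)\sim s\,k^{2-4s}\lambda^{s-1}\to\infty$ as $\lambda\to0^+$. Likewise every $D^lb_k$ with $l\ge1$ is unbounded on $(0,\Theta C^2]$. Lemma~\ref{smoothness of bk} only examines $\lambda=k^2$, and its claim of smoothness at $\lambda=0$ is false because $\lambda^s$ is not differentiable there. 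The correct statement is Remark~\ref{analyticity of bk}: analyticity on $(0,\infty)$.

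Step 2 cannot be repaired, because the estimate itself fails near $\xi=0$: $b_k(q)$ contains the term $k^{2-4s}q^s$, which is homogeneous of degree $2s<2$ in $\xi$. Concretely, take $g^{ij}=\delta_{ij}$, $d\ge2$, $s=\tfrac12$. Then $b_k(\lambda)=\sqrt{\lambda}+k$ exactly, so $b_k(|\xi|^2)=|\xi|+k$. Here $\partial_{\xi_1}^2|\xi|=(|\xi|^2-\xi_1^2)/|\xi|^3$ equals $1/|t|$ at $\xi=(0,t,0,\dots,0)$, which is unbounded as $t\to0$. For $s<\tfrac12$, even the first $\xi$-derivatives blow up like $|\xi|^{2s-1}$.

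The paper's proof has the same hole, just not made explicit: it derives the bounds only ``as $\xi\to\infty$'' and then asserts them for all $x,\xi$, never treating bounded $\xi$. What your argument (and the paper's) actually proves is the estimate on $\{|\xi|\ge c\}$ for any fixed $c>0$. For $|\xi|\ge C$, use your step 3. For $c\le|\xi|\le C$, run your step 2 with $q$ ranging over the compact set $[\theta c^2,\Theta C^2]\subset(0,\infty)$, where $b_k$ is analytic. Equivalently, $\chi(\xi)\,b_k(q(x,\xi))\in S^{2-2s}$ for a cutoff $\chi$ that vanishes near $0$ and equals $1$ for large $|\xi|$.

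The low-frequency part $(1-\chi)\,b_k(q)$ is not a smooth symbol and must be handled separately wherever the lemma is used. For example, in the Euclidean case $\widehat{B_k\phi}=b_k(|\xi|^2)\hat\phi$ has a $|\xi|^{2s}$ singularity at the origin whenever $\hat\phi(0)\neq0$. So $B_k\phi$ is in general not a Schwartz function.
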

	\begin{proof}
		Thanks to uniform ellipticity of the matrix valued function $g^{ij}(x)$, we have that
		\[
		\theta |\xi|^2 \leq \sum_{i,j=1}^d g^{ij}(x)\xi_i\xi_j\leq \Theta |\xi|^2,
		\qquad x,\xi\in\mathbb{R}^d,
		\]
		where $\theta, \Theta$ are some positive constants. Let $q(x,\xi):=\sum_{i,j=1}^d g^{ij}\xi_i\xi_j$. We use Fa\'a di Bruno's formula to compute the mixed derivatives. We have that  
		\[
		\partial^{\beta}_{\xi} b_k(q)= \sum_{m=1}^{|\beta|} (D^m b_k)(q) \prod_{i=1}^m \partial_{\xi}^{\mu_i} q, 
		\]
		where $\mu_i\in (\mathbb{N}\cup\{0\})^d$ for all $i=1,\cdots,m$ and $\displaystyle{\sum_{i=1}^m |\mu_i|=|\beta|}$. Since $q(x,\xi)$ is a polynomial of order two, $\partial_{\xi}^{\mu_i}q$ vanishes where $|\mu_i|\geq 3$. Hence $\displaystyle{\prod_{i=1}^m \partial_{\xi}^{\mu_i} q}$ is a polynomial of degree $\displaystyle{\sum_{i=1}^m (2-|\mu_i|)}=2m-|\beta|$ with coefficients that are smooth bounded functions of $x$. Moreover, all derivatives of these coefficients are uniformly bounded. Next we use Leibnitz rule. 
		\[
		\partial_{x}^{\alpha}\partial_{\xi}^{\beta} b_k(q)= \sum_{m=1}^{|\beta|} \left( \sum_{\nu\leq \alpha} \begin{pmatrix}
			\alpha\\ \nu
		\end{pmatrix} \partial^{\nu}_{x}(D^m b_k)(q) \partial^{\alpha-\nu}_{x}\left(\prod_{i=1}^m \partial_{\xi}^{\mu_i} q\right)\right),
		\]
		where $\nu\in(\mathbb{N}\cup\{0\})^d$, $\nu\leq \alpha$ means $\nu\leq \alpha_i$ for all $i=1,\cdots, d$ and $\displaystyle{\begin{pmatrix}
				\alpha\\ \nu
			\end{pmatrix}=\prod_{i=1}^d \begin{pmatrix}
				\alpha_i\\ \nu_i
		\end{pmatrix}}$. We use Fa\`a di Bruno's formula once again. It yields
		\begin{align}\label{intermediate 1}
			\partial_{x}^{\alpha}\partial_{\xi}^{\beta} b_k(q)= \sum_{m=1}^{|\beta|} \left( \sum_{\nu\leq \alpha} \begin{pmatrix}
				\alpha\\ \nu
			\end{pmatrix} \left(\sum_{j=1}^{|\nu|} (D^{m+j} b_k)(q) \prod_{l=1}^{j} \partial_x^{\delta_l} q(x)\right) \partial^{\alpha-\nu}_{x}\left(\prod_{i=1}^m \partial_{\xi}^{\mu_i} q\right)\right),
		\end{align}
		where $\delta_l\in (\mathbb{N}\cup\{0\})^d$ for all $l=1,\cdots,j$ and $\displaystyle{\sum_{l=1}^j |\delta_l|=|\nu|}$. Note that the function $q$ is positive thanks to uniform ellipticity. We can further say that the term $\displaystyle{\partial^{\alpha-\nu}_{x}\left(\prod_{i=1}^m \partial_{\xi}^{\mu_i} q\right)}$ is a polynomial (in $\xi$) of degree $2m-|\beta|$ with smooth and bounded coefficients in `$x$' and similarly the term $\displaystyle{\prod_{l=1}^{j} \partial_x^{\delta_l} q(x)}$ is a polynomial (in $\xi$) of degree $\displaystyle{\sum_{l=1}^j 2=2j}$ with smooth bounded coefficients in `$x$'. Moreover, thanks to Lemma \ref{decay of derivatives}, we have that 
		\[
		\left| (D^{m+j} b_k)(q)\right| \lesssim | q|^{1-s-m-j}, \quad \mbox{as} \ q\to \infty. 
		\]
		Thanks to uniform ellipticity again we can write the above expression as 
		\[
		\left| (D^{m+j} b_k)(q)\right| \lesssim | \xi|^{2-2s-2m-2j}, \quad \mbox{as} \ \xi\to \infty.
		\]
		Hence, from \eqref{intermediate 1}, we conclude that
		\[
		\left|\partial_x^{\alpha}\partial_{\xi}^{\beta} b_k\left(\sum_{i,j=1}^d g^{ij}(x)\xi_i\xi_j\right)\right|
		\lesssim
		\langle \xi\rangle^{2-2s-2m-2j+2j+2m-|\beta|}=  \langle \xi\rangle^{2-2s-|\beta|}, \quad \forall\, x, \xi\in\mathbb{R}^d.
		\]
	\end{proof}
	As an immediate consequence  of the above lemma, we have the following result.
	\begin{lem}\label{psedodifferential operator of degree}
		The operator $B_k := b_k(-\Delta_g)$ is an elliptic pseudodifferential operator of order $2-2s$ and the principle symbol is given by 
		\[
		\sigma_{\mathrm{pr}}(B_k)(x,\xi)
		=
		\frac{\sum_{i,j=1}^d g^{ij}(x)\xi_i\xi_j-k^2}
		{\bigl( \sum_{i,j=1}^dg^{ij}(x)\xi_i\xi_j\bigr)^s-k^{2s}}.
		\]
	\end{lem}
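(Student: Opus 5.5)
We plan to show that $B_k=b_k(P_g)$, with $P_g=-\Delta_g$, belongs to $\Psi^{2-2s}_{\mathrm{cl}}$ by a functional calculus argument, and then to read off the principal symbol and ellipticity. A direct approach (\emph{define} the operator with symbol $b_k(q(x,\xi))$, where $q(x,\xi)=\sum g^{ij}(x)\xi_i\xi_j$) only gives an operator in $\Psi^{2-2s}$ by Lemma \ref{towards functional calculus}. It does not show that this operator equals $b_k(P_g)$ modulo lower order. So we will use the Helffer--Sj\"ostrand formula. Lemma \ref{decay of derivatives} and Remark \ref{analyticity of bk} show that $b_k$ is a symbol of order $1-s$ on $[0,\infty)$. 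Note also that $P_g\ge 0$ is self-adjoint on $L^2(\sqrt{|g|}dx)$, and that $b_k$ is smooth and bounded near $\lambda=0$ by Lemma \ref{smoothness of bk}. We then extend $b_k$ smoothly to all of $\mathbb R$ as a symbol $f\in S^{1-s}(\mathbb R)$ and write
\[
f(P_g)=-\frac{1}{\pi}\int_{\mathbb C}\bar\partial \tilde f(z)\,(P_g-z)^{-1}\,dL(z),
\]
where $\tilde f$ is an almost analytic extension. Since the order $1-s$ is positive, we first apply this to $f(\lambda)\langle\lambda\rangle^{-N}$ with $N>1-s$ and multiply back by the differential operator $(1+P_g)^N$ afterwards.

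The next step is to insert the parameter-dependent parametrix of the resolvent. Because $P_g$ has bounded smooth coefficients and is uniformly elliptic, $(P_g-z)^{-1}$ has a symbolic expansion $\sum_j r_j(x,\xi,z)$ with $r_0=(q(x,\xi)-z)^{-1}$. Each $r_j$ is a finite sum of terms $c(x,\xi)(q-z)^{-1-l}$, where $c$ is polynomial in $\xi$ with homogeneity chosen so that $r_j$ has order $-2-j$ jointly in $(\xi,|z|^{1/2})$. Integrating term by term against $\bar\partial\tilde f$ and using Cauchy's formula turns $(q-z)^{-1-l}$ into $f^{(l)}(q)/l!$. This gives an asymptotic expansion of the symbol of $b_k(P_g)$,
\[
b_k(q(x,\xi))+\sum_{j\ge1}\sum_l c_{j,l}(x,\xi)\,b_k^{(l)}(q(x,\xi)).
\]
By Lemma \ref{decay of derivatives} and a Fa\`a di Bruno argument as in Lemma \ref{towards functional calculus}, the $j$-th term lies in $S^{2-2s-j}$. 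The remainder is handled with the standard resolvent estimates: the error after $M$ terms is $\mathcal O(|z|^{-M'})$ as a map $H^{t}\to H^{t+M''}$, and $|\bar\partial\tilde f(z)|=\mathcal O(|\operatorname{Im}z|^\infty\langle z\rangle^{-\infty})$ after the $\langle\lambda\rangle^{-N}$ reduction, so the remainder is smoothing to arbitrarily high order. Classicality follows because each term is, for large $\xi$, a sum of products of homogeneous functions with powers $q^{1-s-l}$ expanded from $b_k(\lambda)=\lambda^{1-s}(1-k^2/\lambda)(1-k^{2s}\lambda^{-s})^{-1}$, a convergent series in $\lambda^{-1},\lambda^{-s}$. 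Strictly speaking this gives a polyhomogeneous expansion with steps $2s$ as well as integer steps; we accept this as ``classical'' in the broad sense used here. The principal symbol is then $b_k(q)$ modulo $S^{1-2s}$ (leading homogeneous part $q^{1-s}$).

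Ellipticity comes from the lower bound $|b_k(q)|\gtrsim \langle\xi\rangle^{2-2s}$ for large $|\xi|$. This follows from $b_k(\lambda)\sim\lambda^{1-s}$ as $\lambda\to\infty$ together with uniform ellipticity $q\ge\theta|\xi|^2$; positivity of $b_k$ on $[0,\infty)$ is immediate since numerator and denominator have the same sign. The main obstacle is the middle step: justifying the Helffer--Sj\"ostrand or parametrix argument on noncompact $\mathbb R^d$ with only bounded-geometry assumptions, and controlling the remainder uniformly in $z$. If that becomes too heavy, the fallback is Seeley's theorem as quoted in the paper. Writing $b_k(P_g)=(P_g-k^2)\,(P_g^s-k^{2s})^{-1}$ away from the characteristic set reduces the claim to showing that $(P_g^s-k^{2s})$ has a parametrix in $\Psi^{-2s}$, which is true by ellipticity of $P_g^s\in\Psi^{2s}_{\mathrm{cl}}$. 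The exact inverse fails where $\lambda=k^2$, but the spectral cutoff $\chi(P_g)$ near $k^2$ produces an operator $b_k\chi(P_g)$ that is smoothing, by the same functional calculus applied to a compactly supported function. So we write $B_k=b_k\chi(P_g)+(P_g-k^2)\,[(1-\chi)(P_g)\,(P_g^s-k^{2s})^{-1}]$, where the bracket is in $\Psi^{-2s}$.
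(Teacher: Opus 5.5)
Your route is genuinely different from the paper's. The paper proves only ellipticity and takes the rest from Lemma \ref{towards functional calculus}, i.e.\ from the fact that $b_k(q)$ satisfies $S^{2-2s}$ estimates. It never shows that the spectrally defined $b_k(P_g)$ agrees with $\operatorname{Op}(b_k(q))$ modulo lower order. You correctly identify this as the real content and supply it with Helffer--Sj\"ostrand plus the parameter-dependent resolvent parametrix, which yields the full symbol expansion and hence the principal symbol modulo $S^{1-2s}$. Your ellipticity argument is the paper's. Your remark that the expansion proceeds in steps of $2s$ (so $B_k$ is polyhomogeneous but not classical in the strict sense) is correct and costs nothing, since the lemma asserts only order and principal symbol. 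The noncompactness you single out as the main obstacle is handled by the standard uniform-symbol version of this calculus; the real obstruction lies elsewhere.

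The step that fails is the smooth extension of $b_k$ through $\lambda=0$. Near $0$,
$b_k(\lambda)=k^{2-2s}+k^{2-4s}\lambda^{s}+O(\lambda^{\min(1,2s)})$, so $b_k'(\lambda)\to+\infty$ as $\lambda\to0^+$. Lemma \ref{smoothness of bk}, which you cite, is correct only on $(0,\infty)$, as in Remark \ref{analyticity of bk}. Since $0\in\sigma(P_g)$ (the functions $\phi(x/R)$ have Rayleigh quotient $O(R^{-2})$), no smooth $f\in S^{1-s}(\R)$ agrees with $b_k$ on the spectrum. Hence your Helffer--Sj\"ostrand formula does not compute $b_k(P_g)$. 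Correspondingly, $b_k(q)$ and the $b_k^{(l)}(q)$ violate the symbol estimates near $\xi=0$ (second $\xi$-derivatives of $q^s$ grow like $|\xi|^{2s-2}$), so Lemma \ref{towards functional calculus} also fails there. Your fallback does not escape this. The cutoff near $k^2$ removes the zero of $\lambda^s-k^{2s}$, but $(1-\chi(\lambda))(\lambda^s-k^{2s})^{-1}$, and $\lambda^s$ itself, have the same singularity at $0$. Seeley's theorem is a compact-manifold result, where altering a function near $\lambda=0$ costs only a finite-rank smoothing operator.

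The repair is a second cutoff. Take $\chi_0\in C_c^\infty(\R)$ with $\chi_0=1$ near $0$. Then $f=(1-\chi_0)b_k$, extended by $0$ to $\lambda\le 0$, lies in $S^{1-s}(\R)$, and your argument gives $f(P_g)=\operatorname{Op}(a)$ with $a\in S^{2-2s}$ and $a-(1-\chi_0(q))\,b_k(q)\in S^{1-2s}$. The remainder $(\chi_0 b_k)(P_g)$ maps $H^{-N}\to H^{N}$ for every $N$; to see this, write it as $(1+P_g)^{-N}\,[(1+\lambda)^{2N}\chi_0 b_k](P_g)\,(1+P_g)^{-N}$. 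So it has a $C^\infty$ kernel and does not affect order, ellipticity or principal symbol in the local calculus, and with that reading the lemma holds. The remainder is, however, not in $\operatorname{Op}S^{-\infty}$. Already for $g=\delta$ its kernel $\mathcal F^{-1}[\chi_0(|\xi|^2)b_k(|\xi|^2)](x-y)$ decays only like $|x-y|^{-d-2s}$, and $B_k\phi\notin\mathcal S$ for Gaussian $\phi$. So what you can prove is ``$B_k\in\operatorname{Op}S^{2-2s}$ modulo an operator that is smoothing in the Sobolev scale''. The global version, which would give $B_k:\mathcal S\to\mathcal S$ as used in the proof of Theorem \ref{equivalence result}, is false.
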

	\begin{proof}
		We only show the ellipticity. Rest follows from Lemma \ref{towards functional calculus}. Let $\lambda=\sum_{i,j=1}^dg^{ij}(x)\xi_i\xi_j$. The principle symbol can be expressed as 
		\begin{align}\label{intermediate 2}
			\sigma_{\mathrm{pr}}(B_k)(x,\xi)= \lambda^{1-s}\frac{(1-k^2\lambda^{-1})}{(1-k^{2s}\lambda^{-s})}.
		\end{align}
		We will show that the function $f:(0,\infty)\to \mathbb{R}$, defined by $\displaystyle{f(\lambda):=\frac{(1-k^2\lambda^{-1})}{(1-k^{2s}\lambda^{-s})}}$, is positive and bounded from below when $\lambda$ significantly large. Clearly, when $\lambda>k^2$, both the numerator and denominator are positive and hence $f(\lambda)>0$. Similarly $f(\lambda)>0$, when $\lambda<k^2$ also. When $\lambda=k^2$, we have that $\displaystyle{f(\lambda=k^2)=\lim_{\lambda\to k^2}\frac{(1-k^2\lambda^{-1})}{(1-k^{2s}\lambda^{-s})}=\frac{1}{s}>0}$. Next consider $\lambda>2k^2$, then $\displaystyle{f(\lambda)\geq \frac{1}{1-2^{-s}}}$. Hence, thanks to uniform ellipticity \eqref{uniform ellipticiy}, e have that
		\[
		\sigma_{\mathrm{pr}}(B_k)(x,\xi) \gtrsim |\xi|^{2-2s}, \quad \forall \, x,\xi\in\mathbb{R}^d\, \ \text{with}\ |\xi|\geq \frac{2k^2}{\theta},
		\]
		where $\theta$ is as defined in \eqref{uniform ellipticiy}. Thus we conclude the proof.
	\end{proof}

	\vspace{.3cm}
	\textbf{Acknowledgment:} The authors were funded by the Department of Atomic Energy $($DAE$)$, Government of India. 
	
	The authors sincerely express gratitude to Prof. Tuhin Ghosh (Harish-Chandra Research Institute, Prayagraj, Uttar Pradesh, India) for his valuable insights and feedback, which significantly contributed to improve this article.


	\vspace{.3cm}
	\textbf{Data Availability:} The authors shall permit all the data underlying the findings of this manuscript to be shared by any researchers or groups who are interested in the article.

	\vspace{.3cm}
	\textbf{Declaration:}

	\vspace{.3cm}
	\textbf{Conflict of Interest:} The authors declare that there is no conflict of interest regarding the publication of this paper.
	

	{\small 
		\bibliographystyle{alpha}
		\bibliography{ref}}

\end{document}